\documentclass{amsart}
\usepackage{amsmath,amssymb,bm,amscd}
\usepackage{color}
\usepackage[dvipdfmx]{graphicx}
\usepackage{here}
\numberwithin{equation}{section}
%
    %
\theoremstyle{definition}
\newtheorem{defn}{Definition}[section]

\newtheorem{rem}[defn]{Remark}
\newtheorem{question}[defn]{Question}
\theoremstyle{plain}
\newtheorem{thm}[defn]{Theorem}

\newtheorem{lem}[defn]{Lemma}
\newtheorem{cor}[defn]{Corollary}

\newtheorem{conj}[defn]{Conjecture}

%
\title{On the Lagrangian fillability of almost positive links}
\author{Keiji Tagami}
\subjclass[2010]{57M25, 57R17}
\keywords{knot; almost positive knot; Legendrian knot; Lagrangian filling; contact structure}
\date{\today}
\address{
Department of Mathematics, Faculty of Science and Technology, Tokyo University
of Science, 2641 Yamazaki, Noda, Chiba, 278-8510, Japan
}
\email{tagami\_keiji@ma.noda.tus.ac.jp}
\begin{document}
\maketitle
\begin{abstract}
In this paper, we prove that a link which has an almost positive diagram with a certain condition is Lagrangian fillable. 
%
\end{abstract}
\section{Introduction}
A \textit{knot} is a smooth embedding of a circle into $\mathbf{R}^3$ and a \textit{link} is a smooth embedding of disjoint circles into $\mathbf{R}^3$. 
An oriented link is {\it positive} if it has a link diagram whose crossings are all positive. 
An oriented link is {\it almost positive} if it is not positive and has a link diagram with exactly one negative crossing. 
Such a diagram is called an {\it almost positive diagram}. 
It is known that almost positive links have many properties similar to those of positive links (for example, see \cite{homogeneous}, \cite{negative_signature}, \cite{almost_negative_signatire}, \cite{stoimenow1} and \cite{tagami5}). 
For this reason, in general, it is hard to distinguish positive links from almost positive links. 
\par
In \cite{Hayden-Sabloff}, Hayden and Sabloff studied positive knots in the light of contact and symplectic topology. 
In particular, they considered Lagrangian fillings of links in the symplectisation of the standard contact $3$-manifold $(\mathbf{R}^{3}, \xi _{std})$ and showed the following. 
For the definition of Lagrangian fillings, see Section~\ref{sec:concordance}. 
\begin{thm}[{\cite[Theorem 1.1]{Hayden-Sabloff}}]\label{thm:Hayden-Sabloff}
All positive links are exact Lagrangian fillable. 
\end{thm}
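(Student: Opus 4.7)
My plan is to build an explicit \emph{decomposable} exact Lagrangian filling from a positive diagram, using Seifert's algorithm as the combinatorial blueprint. Fix a positive diagram $D$ of $L$ with $c$ positive crossings and $s$ Seifert circles. Seifert's algorithm assembles an oriented surface $\Sigma$ of Euler characteristic $s-c$ from $s$ disks (the Seifert circles) and $c$ bands (one per crossing). The idea is to mirror this handle decomposition by concatenating elementary Lagrangian cobordisms: Lagrangian $0$-handles (minimum disks capping off max-$tb$ unknots) at the bottom, and Lagrangian $1$-handles (pinch moves) above, one for each crossing.

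First I would promote $D$ to a Legendrian representative $\Lambda \subset (\mathbf{R}^3,\xi_{std})$ whose front projection realizes $D$ after the usual conversion of each over/underpass into the standard Legendrian crossing pattern with enough cusps on the strands to smooth out the fronts. Because every crossing is positive, the two strands at each crossing are coherently oriented, so this Legendrian lift can be arranged so that each crossing is \emph{contractible} in the sense needed for a Lagrangian pinch move (i.e.\ admits a Legendrian $1$-handle attached along the crossing, as in Ekholm--Honda--K\'alm\'an and Bourgeois--Sabloff--Traynor). This is the step I expect to be the main obstacle: one must verify that the local model at every positive crossing of $D$ is simultaneously realizable, in particular that the pinch moves produce the \emph{oriented} resolution (matching Seifert's algorithm) rather than the non-oriented one.

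Next I would perform the $c$ pinch moves, one at each positive crossing, in any compatible order. Each pinch is an elementary exact Lagrangian $1$-handle cobordism, and together they carry $\Lambda$ (downward) to a Legendrian link whose front has no crossings and whose underlying plane curves are exactly the Seifert circles of $D$. Since these are $s$ disjoint simple closed curves in the plane, this link is Legendrian isotopic to a disjoint union of $s$ max-$tb$ Legendrian unknots. Each such unknot bounds an exact Lagrangian minimum disk in the symplectization, so capping off with $s$ such disks completes the filling.

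Concatenating the $s$ minimum disks at the bottom with the $c$ pinch cobordisms above gives an exact Lagrangian surface $F$ in $(\mathbf{R}\times\mathbf{R}^3,d(e^t\alpha_{std}))$ with positive end on $\Lambda$ and empty negative end; decomposability ensures exactness at each stage. Since $\Lambda$ represents the topological link type of $L$, this proves $L$ is exact Lagrangian fillable. A consistency check is that $\chi(F)=s-c=\chi(\Sigma)$, confirming that the construction recovers a surface with the Euler characteristic predicted by Seifert's algorithm.
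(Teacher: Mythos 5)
Your overall blueprint (Seifert circles as $0$-handles, crossings as $1$-handles, assembled into a decomposable exact filling) is the same one used by Hayden and Sabloff and reproduced in this paper via Lemma~\ref{lem:seifert}, but as written there is a genuine gap at the bottom of your filling. After performing one pinch at every crossing you obtain a crossingless front whose components are the Seifert circles of $D$, and you then claim that this link ``is Legendrian isotopic to a disjoint union of $s$ max-$tb$ Legendrian unknots'' simply because its components are disjoint simple closed curves in the plane. That inference is false: a crossingless front component with $\ell$ left cusps and $\ell$ right cusps is a Legendrian unknot with $tb=-\ell$, and for $\ell\geq 2$ it is \emph{not} Legendrian isotopic to the max-$tb$ unknot and bounds no exact Lagrangian disk --- by \cite{Chantraine} a Lagrangian disk filling forces $tb=-1$. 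Since your first step converts an arbitrary positive diagram into a front by inserting ``enough cusps on the strands to smooth out the fronts,'' a generic such lift leaves each Seifert circle with many cusp pairs, and the capping step collapses. The missing ingredient is the normalization this paper performs before anything else: using the bunching deformation of \cite{S-Yamada} (equivalently, Ng's Mondrian-diagram construction \cite{Ng1}), one first isotopes $D$ so that every Seifert circle has exactly one local maximum and one local minimum in the $y$-direction and every crossing has both strands coherently oriented there; only then does each resolved Seifert circle carry exactly one left and one right cusp and admit a $0$-handle cap.

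A second, more minor, point: the $1$-handle of Theorem~\ref{thm:tool} joins a pair of opposing cusps to two parallel strands; it does not literally convert a crossing into its oriented resolution in one move. In Figure~\ref{fig:removing} the elimination of a single positive crossing is effected by \emph{two} $1$-handles followed by a $0$-handle (net Euler characteristic $-1$, so your count $\chi(F)=s-c$ survives), and the induction is organized circle-by-circle --- remove an innermost Seifert circle together with all of its adjacent crossings, as in Lemma~\ref{lem:seifert} --- rather than crossing-by-crossing. Your worries about the order of pinches and about ``simultaneous realizability'' are resolved automatically by that inductive organization. So the architecture of your proof is right, but it is not complete without the diagram normalization and without replacing the single-pinch local model by one actually realizable by the elementary cobordisms of Theorem~\ref{thm:tool}.
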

Naturally, we can consider the following question.  
\begin{question}
Is any almost positive link exact Lagrangian fillable? 
\end{question}
Here, we recall Hayden and Sabloff's observation on Lagrangian fillability \cite{Hayden-Sabloff}. 
By the results of Eliashberg \cite{Eliashberg1}, a Lagrangian fillable knot is isotopic to a transverse knot with a symplectic filling. 
By the work of Boileau and Orevkov \cite{Boileau-Orevkov}, we see that such a knot is quasipositive. 
Moreover, an exact Lagrangian filling of a Legendrian knot induces a $2$-graded normal ruling of the knot. 
(In fact, by Ekholm \cite{Ekholm1}, and Ekholm, Honda and K{\'a}lm{\'a}n \cite{Ekholm-Honda-Kalman}, an exact Lagrangian filling induces an (ungraded) augmentation. 
Because our Lagrangian fillings are oriented, such augmentations are $2$-graded (see \cite[Remark~2.3]{Cronwell-Ng-Sivek}). 
By Fuchs and Ishkhanov \cite{Fuchs-Ishkhanov} and Sabloff \cite{Sabloff1}, it is shown that the existence of a $2$-graded augmentation is equivalent to that of a $2$-graded normal ruling.)
By Rutherford's work \cite{Rutherford1}, for such a Legendrian knot, the HOMFLYPT bound on the maximal Thurston-Bennequin number is sharp, that is, $\operatorname{TB}(K)=-\max\operatorname{deg}_{v}P_{K}(v, z)-1$, where $\operatorname{TB}(K)$ is the maximal Thurston-Bennequin number of $K$ and $P_{K}(v, z)$ is the HOMFLYPT polynomial of $K$. 
In \cite{Hayden-Sabloff}, Hayden and Sabloff conjectured the following.  
\begin{conj}[{\cite[Conjecture 1.3]{Hayden-Sabloff}}]\label{conj:Hayden-Sabloff}
A knot is exact Lagrangian fillable if and only if it is quasipositive and the HOMFLYPT bound on the maximal Thurston-Bennequin number of $K$ is sharp. 
\end{conj}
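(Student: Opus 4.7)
The plan is to handle the two implications separately. The forward direction is essentially an assembly of the chain of results recalled just above the conjecture. If $K$ admits an exact Lagrangian filling $L$, then Eliashberg's symplectic cap construction combined with Boileau--Orevkov yields quasipositivity of $K$. On the other hand, Ekholm--Honda--K\'alm\'an associate to $L$ an augmentation of the Chekanov--Eliashberg DGA of a maximal-$\mathrm{tb}$ Legendrian representative $\Lambda$, this augmentation is $2$-graded because $L$ is oriented, Fuchs--Ishkhanov and Sabloff convert it into a $2$-graded normal ruling of $\Lambda$, and Rutherford's theorem then delivers sharpness of the HOMFLYPT bound on $\mathrm{TB}(K)$.

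For the reverse direction, assume $K$ is quasipositive with a braid factorization $\beta = \prod_{i=1}^{n} w_i \sigma_{j_i} w_i^{-1}$ and that the HOMFLYPT bound is sharp. I would proceed in three steps. \emph{Step 1:} use the sharp bound and Rutherford's theorem to extract a Legendrian representative $\Lambda$ of $K$ attaining the maximal Thurston--Bennequin number, together with an explicit $2$-graded normal ruling $\rho$. \emph{Step 2:} decompose $\Lambda$ along $\rho$ into a sequence of elementary Legendrian tangle moves and realize each switch of $\rho$ as an elementary Lagrangian saddle cobordism using the standard local models of Ekholm--Honda--K\'alm\'an. \emph{Step 3:} close off the bottom of this cobordism into a Lagrangian disk using the embedded quasipositive surface coming from the factorization of $\beta$.

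The main difficulty lies in Step 2 together with the compatibility check in Step 3. The set of augmentations realized by exact Lagrangian fillings is in general a proper subset of all augmentations, so the ruling $\rho$ produced in Step 1 need not, a priori, arise from any filling; one therefore has to choose $\rho$ compatibly with the quasipositive structure and then verify that its switches actually lift to honest Lagrangian pinch cobordisms which match the quasipositive surface at the closure. Bridging the ruling-theoretic and the braid-theoretic data appears to require a genuinely new geometric input, which is presumably why the conjecture is still open in full generality. For the almost positive case, I would expect the single negative crossing to provide enough extra rigidity to make the matching explicit after one preliminary saddle move, and this is plausibly the route the rest of the paper takes to prove its restricted Lagrangian fillability result.
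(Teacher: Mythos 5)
The statement you are asked about is a \emph{conjecture} (Hayden--Sabloff's Conjecture~1.3), and the paper gives no proof of it; it only records the forward implication as a consequence of known results. Your assembly of that forward direction is correct and matches what the paper does: exactness plus Eliashberg and Boileau--Orevkov give quasipositivity, and the chain Ekholm--Honda--K\'alm\'an $\Rightarrow$ $2$-graded augmentation $\Rightarrow$ $2$-graded ruling (Fuchs--Ishkhanov, Sabloff) $\Rightarrow$ sharpness of the HOMFLYPT bound (Rutherford) is exactly the observation recorded before the conjecture and used in Corollary~\ref{cor:fillable-equality}.

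Your reverse direction, however, is not a proof, and the gap is the one you yourself identify. In Step~2 there is no general mechanism that converts a $2$-graded normal ruling into a sequence of Lagrangian saddle cobordisms: rulings and augmentations are algebraic/combinatorial certificates, and the set of augmentations induced by exact Lagrangian fillings is in general strictly smaller than the set of all augmentations, so the ruling $\rho$ extracted in Step~1 need not be geometrically realizable. In Step~3 there is likewise no known way to cap off with the quasipositive Bennequin-type surface, since a quasipositive surface in $B^4$ is a priori only symplectic/complex, not Lagrangian, and $\operatorname{TB}(K)=2g_4(K)-1$ is a necessary but not sufficient condition. These are precisely the obstructions that keep the conjecture open, so your text should be presented as a discussion of the conjecture rather than as a proof; what the present paper actually proves are the special cases in Theorems~\ref{thm:fillable1} and \ref{thm:almost-lagrangian}, where the Lagrangian filling is built directly by handle attachments (Theorem~\ref{thm:tool} and Lemma~\ref{lem:seifert}) from an explicit front projection, not by lifting a ruling.
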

On the other hand, the following are known: 
\begin{itemize}
\item if $K$ is Lagrangian fillable, then $\operatorname{TB}(K)=2g_4(K)-1$, where $g_{4}(K)$ is the $4$-ball genus of $K$. Moreover $g_4(K)$ is equal to the genus of its Lagrangian filling \cite{Chantraine}, 
%
\item $tb(\Lambda)+|r(\Lambda)|\leq 2g_{4}(K)-1\leq 2g_3(K)-1$, where $\Lambda$ is a Legendrian representative of $K$, $g_3(K)$ is the genus of $K$, $tb(\Lambda )$ is the Thurston-Bennequin number and $r(\Lambda)$ is the rotation number of $\Lambda$ \cite{Bennequin, Rudolph},  
\item $tb(\Lambda)+|r(\Lambda)|\leq 2\tau(K)-1$, where $\tau(K)$ is the Ozsv{\'a}th-Szab{\'o} invariant of $K$ \cite{Plamenevskaya}, 
\item $tb(\Lambda)+|r(\Lambda)|\leq s(K)-1 $, where $s(K)$ is the Rasmussen invariant of $K$ \cite{Plamenevskaya2, Shumakovitch}, 
\item $tb(\Lambda)+|r(\Lambda)|\leq -\max \operatorname{deg}_{v} P_{K}(v,z)-1$ \cite{Frank-Williams, Morton1} (see also \cite{Fuchs-Tabachnikov}). 
\end{itemize}
It is well known that if $K$ is quasipositive, we see that $s(K)$ and $2\tau(K)$ are equal to $2g_4(K)$ (\cite{Shumakovitch} for $s$ and \cite{Plamenevskaya} for $\tau$). 
Hence, we obtain the following. 
\begin{cor}\label{cor:fillable-equality}
If a knot $K$ is exact Lagrangian fillable, then $K$ is quasipositive and satisfies 
\[
\operatorname{TB}(K)+1=2\tau(K)=s(K)=2g_4(K)=-\max \operatorname{deg}_{v} P_{K}(v,z). 
\]
\end{cor}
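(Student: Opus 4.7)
The plan is to assemble the cited results step by step; no new computation is needed beyond matching the inequalities in the bulleted list with the known upper bounds for quasipositive knots and with Chantraine's equality.

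First I would derive the quasipositivity conclusion. The discussion preceding Conjecture~\ref{conj:Hayden-Sabloff} shows that a Lagrangian fillable knot is isotopic to a transverse knot admitting a symplectic filling (Eliashberg), and by Boileau--Orevkov such a knot is quasipositive. This immediately yields $s(K)=2\tau(K)=2g_4(K)$ via the standard identities for quasipositive knots recalled just before the corollary. The equality $\operatorname{TB}(K)+1=2g_4(K)$ is Chantraine's theorem, stated as the first item of the bulleted list, and applies because $K$ is Lagrangian fillable.

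The only remaining link in the chain is $2g_4(K)=-\max\operatorname{deg}_v P_K(v,z)$. Here I would invoke the observation already recalled in the excerpt that a Lagrangian filling induces a $2$-graded augmentation (Ekholm, Ekholm--Honda--K\'alm\'an) and hence a $2$-graded ruling (Fuchs--Ishkhanov, Sabloff). By Rutherford's sharpness theorem, this forces $\operatorname{TB}(K)=-\max\operatorname{deg}_v P_K(v,z)-1$. Combined with Chantraine's equality one obtains $-\max\operatorname{deg}_v P_K(v,z)=2g_4(K)$, which closes the claimed chain.

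Since the proof is a direct concatenation of cited theorems, there is no real obstacle; the only bookkeeping point is to ensure that the ruling produced from the Lagrangian filling is genuinely $2$-graded rather than merely ungraded, for which the paper's own pointer to Cromwell--Ng--Sivek (via the orientability of the filling) suffices.
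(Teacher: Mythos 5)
Your proposal is correct and assembles exactly the same chain of citations that the paper uses implicitly (the corollary is stated as a direct consequence of the preceding discussion): Eliashberg and Boileau--Orevkov for quasipositivity, the identities $s(K)=2\tau(K)=2g_4(K)$ for quasipositive knots, Chantraine's equality $\operatorname{TB}(K)+1=2g_4(K)$, and the $2$-graded ruling plus Rutherford's theorem for the sharpness of the HOMFLYPT bound. Your bookkeeping remark about the $2$-gradedness of the ruling coming from orientability of the filling matches the paper's own pointer, so nothing is missing.
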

\begin{rem}
The Lagrangian fillability implies $r(\Lambda )=0$. 
When $r(\Lambda )=0$, it is known that the sharpness of the HOMFLYPT bound induces the sharpness of the Kauffman bound on $tb(\Lambda )$ \cite{Rutherford1}, and the sharpness of the Rasmussen bound induces the sharpness of the Khovanov bound on $tb(\Lambda )$ \cite{Ng1}. 
\end{rem}
In this paper, we prove Theorem~\ref{thm:fillable1} below. 
\begin{thm}\label{thm:fillable1}
Let $D$ be an almost positive link diagram of a link $L$. 
Suppose that there is a positive crossing connecting the two Seifert circles which are connected by the negative crossing. 
Then $L$ is exact Lagrangian fillable. 
\end{thm}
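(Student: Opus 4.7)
The plan is to reduce to Theorem~\ref{thm:Hayden-Sabloff} by constructing an exact Lagrangian cobordism from a positive link $L'$ up to $L$. Starting from the almost positive diagram $D$, perform the oriented (Seifert) smoothing at the unique negative crossing $c_-$ to obtain a diagram $D'$. Since the smoothing preserves every other crossing, all crossings of $D'$ are positive, so the link $L'$ represented by $D'$ is a positive link. The diagrams $D$ and $D'$ differ by a single saddle move, giving a smooth genus-zero cobordism between $L$ and $L'$.

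The heart of the proof is to promote this saddle to an exact Lagrangian 1-handle attachment in the symplectization of $(\mathbf{R}^3, \xi_{std})$. To do so, construct a Legendrian representative $\Lambda$ of $L$ whose front projection is modeled on $D$: every positive crossing is realized as a Legendrian crossing, and $c_-$ is placed in a local Legendrian configuration admitting a pinch (saddle) move in the sense of Ekholm, Honda and K{\'a}lm{\'a}n~\cite{Ekholm-Honda-Kalman}. The hypothesis that there is a positive crossing $c_+$ joining the same two Seifert circles as $c_-$ is what makes this arrangement possible: it constrains the global Legendrian geometry of the strands meeting at $c_-$ so that the orientations and slopes can be chosen compatibly with a standard Lagrangian saddle. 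The resulting exact Lagrangian cobordism $\Sigma_0$ has $L'$ at $-\infty$ and $L$ at $+\infty$. Applying Theorem~\ref{thm:Hayden-Sabloff} to the positive link $L'$ yields an exact Lagrangian filling $\Sigma'$, and concatenating $\Sigma'$ with $\Sigma_0$ gives an exact Lagrangian filling of $L$. The quasipositivity of $L$ then follows from the results of Eliashberg~\cite{Eliashberg1} and Boileau and Orevkov~\cite{Boileau-Orevkov} recalled in the introduction.

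The main obstacle is the Legendrian realization step: choosing the Legendrian front of $L$ so that the oriented smoothing at $c_-$ corresponds to a standard Lagrangian pinch move. The shared-Seifert-circle hypothesis is used precisely here, and without it the local Legendrian geometry at $c_-$ would in general obstruct such a pinch.
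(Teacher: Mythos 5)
Your strategy---smooth the negative crossing to get a positive link $L'$, fill $L'$ by Theorem~\ref{thm:Hayden-Sabloff}, and then realize the reverse of the smoothing as an exact Lagrangian $1$-handle from $L'$ up to $L$---is not the paper's route, and it cannot work. The obstruction is numerical and comes from Chantraine's theorem \cite{Chantraine}, which forces $tb(\Lambda_1)-tb(\Lambda_0)=-\chi(\Sigma)$ for a Lagrangian cobordism $\Lambda_0\prec_\Sigma\Lambda_1$ and, in particular, forces any Lagrangian filling of a knot $K$ to have genus exactly $g_4(K)$. Take $L=K$ a knot with an almost positive diagram $D$ satisfying (P2). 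The oriented smoothing $D'$ at the negative crossing is a positive diagram with $c(D)-1$ crossings and $s(D)$ Seifert circles, and the Hayden--Sabloff filling $\Sigma'$ of $L'$ has $\chi(\Sigma')=s(D)-c(D)+1$, with $tb(\Lambda')=-\chi(\Sigma')$. Concatenating with any cobordism $\Sigma_0$ realizing a single saddle ($\chi(\Sigma_0)=-1$) would produce a filling $\Sigma$ of $K$ with $\chi(\Sigma)=s(D)-c(D)$, i.e.\ of genus $g_3(D)$, and a Legendrian representative $\Lambda$ of $K$ with $tb(\Lambda)=c(D)-s(D)$. But under (P2) one has $g_3(K)=g_3(D)-1$ \cite{stoimenow1} and $g_4(K)=g_3(K)$, so $\operatorname{TB}(K)=2g_4(K)-1=c(D)-s(D)-2$ by Corollary~\ref{cor:fillable-equality}. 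Your $\Lambda$ would violate this bound by $2$, so the cobordism $\Sigma_0$ you posit---elementary pinch or not---does not exist. The conceptual reason is that the elementary $1$-handle of Theorem~\ref{thm:tool}, read upward through the symplectization, only creates configurations that are \emph{positive} crossings with respect to the orientation; there is no elementary exact Lagrangian move that creates a negative crossing going up, and the hypothesis (P2) does not remove this obstruction in the way you suggest.

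The paper's proof of Theorem~\ref{lem:fillable1} uses (P2) for the opposite purpose: not to build the negative crossing, but to \emph{cancel} it. After normalizing $D$ into a front $\Delta$ (via bunching deformations and Ng's Mondrian construction, with the negative crossing placed highest), the negative crossing and the positive crossing guaranteed by (P2), which join the same two Seifert circles, can be eliminated together by a Legendrian Reidemeister move II. What remains is handled by an induction on the number of Seifert circles, peeling off innermost Seifert circles not adjacent to the negative crossing using Lemma~\ref{lem:seifert} (isotopies, $0$-handles, and $1$-handles creating only positive crossings). The resulting filling has genus $g_3(D)-1=g_4(K)$, consistent with Chantraine's constraint, which is exactly the Euler characteristic your construction overshoots by one saddle. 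If you want to salvage your outline, the R2 cancellation of the negative crossing against the (P2) positive crossing is the step you should be looking for in place of the Lagrangian saddle.
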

In this paper, the condition supposed in Theorem~\ref{thm:fillable1} is called $($P$2)$. 
%
%
%


%
%
\par
Hayden-Sabloff \cite{Hayden-Sabloff} have proved that Lagrangian fillability and strongly quasipositivity are independent conditions. 
In particular, they gave a Lagrangian fillable and non-strongly quasipositive knot. 
In Section~\ref{sec:infinite}, we give infinitely many almost positive (in particular, non-positive), Lagrangian fillable and strongly quasipositive knots (Theorem~\ref{thm:almost-lagrangian}). 
%
\par
%
%
%
%
This paper is organized as follows: 
In Section~\ref{sec:concordance}, we recall the definition of Lagrangian fillings. 
In Section~\ref{sec:bunching}, we recall the bunching deformation, which is a key tool to prove the main result. 
In Section~\ref{sec:main}, we prove Theorem~\ref{thm:fillable1} (Theorem~\ref{lem:fillable1}). 
In Section~\ref{sec:infinite}, we give infinitely many almost positive, Lagrangian fillable and strongly quasipositive knots. 
In Section~\ref{sec:discussion}, we compare the Lagrangian fillability and the positivity of links. 
\par
Throughout this paper, we suppose that links and Legendrian links are oriented. 
In our pictures, the $y$-coordinate is the horizontal coordinate and the $z$-coordinate is the vertical coordinate. 
\section{Lagrangian fillings}\label{sec:concordance}
In this section, we recall the definition of Lagrangian fillings and describe a tool which allows us to construct Lagrangian fillings. 
\par
The \textit{standard contact structure} $\xi _{std}$ on $\mathbf{R}^{3}$ is $\operatorname{Ker} \alpha$, where $\alpha=dz+xdy$. 
A \textit{Legendrian link} in $(\mathbf{R}^3, \xi _{std})$ is a smooth embedding of disjoint circles which are tangent to $\xi _{std}$. 
A \textit{front projection} of a Legendrian link is the image of the link under the $(y, z)$-projection. 
A Legendrian link $\Lambda $ is a \textit{Legendrian representative} of a link $L$ if $\Lambda $ is isotopic to $L$ in smooth category. 
The \textit{symplectisation} of $(\mathbf{R}^3, \xi _{std})$ is the symplectic $4$-manifold $(\mathbf{R}\times \mathbf{R}^3, d(e^t\alpha))$, where $t$ is the first coordinate. 
Let $\Lambda _{0}$ and $\Lambda _{1}$ be oriented Legendrian links in $(\mathbf{R}^3, \xi _{std})$. 
Let $\Sigma$ be an oriented Lagrangian submanifold in the symplectisation, that is, an oriented $2$-submanifold with $d(e^t\alpha)|_{\Sigma}=0$. 
Then, $\Sigma$ is a \textit{Lagrangian cobordism} from $\Lambda _{0}$ to $\Lambda _{1}$ with cylindrical Legendrian ends $\mathcal{E}_{\pm }$ if there exists a pair of real numbers $T_{-}<T_{+}$ such that 
\begin{itemize}
\item $\mathcal{E}_{+}:=\Sigma \cap (T_{+}, \infty)\times \mathbf{R}^{3}=(T_{+}, \infty)\times \Lambda _{1}, $
\item $\mathcal{E}_{-}:= \Sigma \cap (-\infty, T_{-})\times \mathbf{R}^{3}=(-\infty, T_{-})\times \Lambda _{0}, $ and
\item $\Sigma\setminus (\mathcal{E}_{+}\cup \mathcal{E}_{-})$ is a compact oriented surface with boundary $\Lambda _{1}\cup (-\Lambda _{0})$. 
\end{itemize}
Moreover, if $e^{t}\alpha |_{\Sigma}$ is exact and $f$ is constant on each of $\mathcal{E}_{\pm }$ whenever $df=e^{t}\alpha|_{\Sigma}$, we call $\Sigma$ an {\it exact} Lagrangian cobordism. 
If there exists a Lagrangian cobordism $\Sigma$ from $\Lambda _{0}$ to $\Lambda _{1}$, we say $\Lambda _{0}$ is \textit{Lagrangian cobordant} to $\Lambda _{1}$ (denoted by $\Lambda _0 \prec_{\Sigma} \Lambda _1$). 
An oriented Legendrian link $\Lambda $ is \textit{Lagrangian fillable} if $\emptyset \prec _{\Sigma} \Lambda $. 
Then $\Sigma$ is called a \textit{Lagrangian filling} of $\Lambda$. 
A smooth oriented link is \textit{Lagrangian fillable} if it  has a Legendrian representative with a Lagrangian filling (see \cite{Chantraine}). 
Similarly, exact Lagrangian cobordisms, exact Lagrangian fillablility and exact Lagrangian fillings are defined. 
\par
Here, we introduce tools to construct (exact) Lagrangian cobordisms. 
\begin{thm}[{\cite[Theorem 2.2]{Hayden-Sabloff}, \cite{BFS, Chantraine, Ekholm-Honda-Kalman, Rizell}}]\label{thm:tool}
Let $\Lambda_0$ and $\Lambda_1$ be Legendrian links in $(\mathbf{R}^3, \xi _{std})$. 
If one of the following holds, we obtain $\Lambda_0\prec_{\Sigma} \Lambda_1$ with an exact Lagrangian cobordism $\Sigma$. 
\begin{itemize}
\item[Isotopy:] $\Lambda_0$ and $\Lambda_1$ are Legendrian isotopic. 
\item[$0$-handle:] a front projection of $\Lambda_1$ is a disjoint union of a front projection of $\Lambda_0$ and a front projection of the Legendrian unknot with $tb=-1$ and $rot=0$ (see the left picture in Figure~\ref{fig:handle}). 
\item[$1$-handle:] a front projection of $\Lambda_1$ and a front projection of $\Lambda_0$ are related as the right picture in Figure~\ref{fig:handle}. 
\end{itemize}
\end{thm}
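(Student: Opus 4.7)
The statement packages three independent local constructions, so I would prove each separately and verify the exactness condition in each case. Throughout, I use that for any Legendrian $\Lambda$ the cylinder $\mathbf{R}\times\Lambda$ is an exact Lagrangian in the symplectization with vanishing primitive for $e^{t}\alpha$; this supplies the cylindrical ends for free.

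For the isotopy case, the plan is to extend the Legendrian isotopy $\{\Lambda_s\}_{s\in[0,1]}$ to an ambient contact isotopy $\phi_s$ of $(\mathbf{R}^3,\xi_{std})$ by a Gray-type stability argument, then lift $\phi_s$ to a Hamiltonian isotopy of the symplectization. After reparametrizing so that $s$ depends smoothly on the $t$-coordinate and is constant outside a compact window $[T_-,T_+]$, the image of $\mathbf{R}\times\Lambda_0$ under the resulting family of symplectomorphisms is an embedded Lagrangian that agrees with $\mathbf{R}\times\Lambda_0$ below $T_-$ and with $\mathbf{R}\times\Lambda_1$ above $T_+$. Exactness is preserved by Hamiltonian isotopy, yielding an exact Lagrangian cobordism.

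For the $0$-handle move, I would assemble the cobordism from two pieces: the standard exact Lagrangian cylinder $\mathbf{R}\times\Lambda_0$ and an explicit exact Lagrangian disk capping off the Legendrian unknot with $tb=-1$ and $\mathrm{rot}=0$ from below. In a Darboux chart adapted to a shrinking family of smaller and smaller unknots, the disk can be written as a sub-level set of the $t$-coordinate; a direct computation shows it is Lagrangian and embedded, caps off at some $t=T_-$, and meets every higher slice transversally along the prescribed unknot. Exactness is automatic by simple connectivity, so the disk glues onto the cylinder to give the desired exact cobordism.

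The main obstacle is the $1$-handle case, which I would build from the pair-of-pants model of Ekholm--Honda--K\'alm\'an (see also Rizell and BFS). In a small neighborhood of the two strands being surgered, one writes down an immersed pair of pants whose image under the projection to $\mathbf{R}^3$ realizes the prescribed front move and on which the ambient symplectic form vanishes. The subtle point is choosing primitives of $e^{t}\alpha$ along the surface so that they match the (vanishing) primitives coming from the two cylindrical ends $\mathbf{R}\times\Lambda_0$ and $\mathbf{R}\times\Lambda_1$; this matching is precisely what distinguishes an exact Lagrangian cobordism from a merely Lagrangian one, and it constrains the local geometry of the saddle. Once such a local model is in hand, gluing it to the cylindrical pieces outside the surgery region completes the cobordism.
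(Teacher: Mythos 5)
The paper does not actually prove this statement: Theorem~\ref{thm:tool} is quoted as a black box from Hayden--Sabloff and the references [BFS, Chantraine, Ekholm--Honda--K\'alm\'an, Rizell], so there is no in-paper argument to compare against. Your outline does follow the route taken in those references: a cylinder built from the trace of the isotopy, an explicit exact Lagrangian disk for the $0$-handle, and the local saddle model of Ekholm--Honda--K\'alm\'an for the $1$-handle.

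There is, however, a genuine gap in your isotopy case as written. If you extend the Legendrian isotopy to an ambient contact isotopy $\phi_s$ with contact Hamiltonian $h_s$ and form the naive suspension $\bigcup_t \{t\}\times \phi_{s(t)}(\Lambda_0)$, the pullback of $e^t\alpha$ to this surface is $e^t s'(t)\,\bigl(h_{s(t)}\circ\phi_{s(t)}\bigr)\,dt$, which is closed only when $h_{s(t)}$ is constant along each $\phi_{s(t)}(\Lambda_0)$; so ``the image of $\mathbf{R}\times\Lambda_0$ under the resulting family of symplectomorphisms,'' read in the only way that produces a cobordism rather than another cylinder, is in general not Lagrangian. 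The standard repair (Chantraine; Ekholm--Honda--K\'alm\'an) adds a correction in the $t$-direction built from the contact Hamiltonian and the conformal factor of $\phi_s$, and then stretches the parametrization so that the corrected trace is embedded; neither step appears in your sketch, and without them the construction fails. Two smaller points: the $1$-handle piece must be an embedded saddle, not merely an ``immersed pair of pants''; and in each case you should verify not only that $e^t\alpha|_{\Sigma}$ admits a primitive but that the primitive can be chosen constant on each cylindrical end, as required by the definition in Section~\ref{sec:concordance} --- this is what makes the elementary pieces concatenable, and while it follows easily here because $e^t\alpha$ vanishes on a cylinder over a Legendrian, it should be said.
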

\begin{figure}[htbp]
\begin{center}
\includegraphics[scale=0.4]{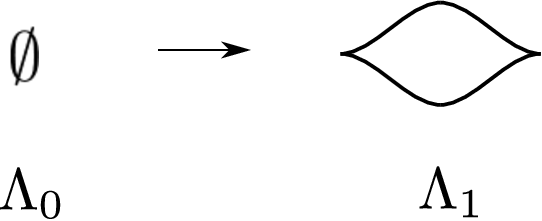}
\hspace{40pt}
\includegraphics[scale=0.4]{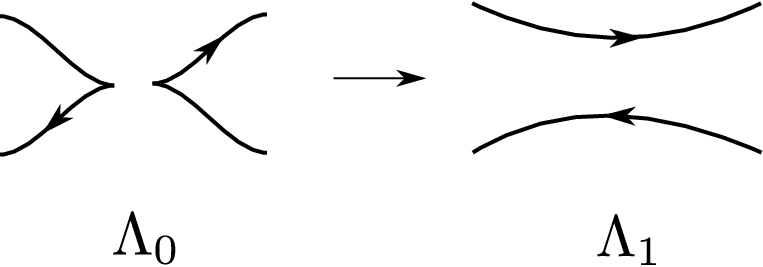}
\end{center}
\caption{A $0$-handle attaching (left). A $1$-handle attaching (right). }
\label{fig:handle}
\end{figure}
\begin{lem}\label{lem:seifert}
Let $\Delta $ be a front projection of a Legendrian link $\Lambda $. 
Let $\Gamma $ be a Seifert circle of $\Delta $. 
Suppose that $\Gamma $ satisfies the following: 
\begin{itemize}
\item $\Gamma $ is an innermost Seifert circle of $\Delta $, 
\item every crossing adjacent to $\Gamma $ has both strands oriented downward or upward with respect to the $y$-coordinate as the top picture in Figure~\ref{fig:removing} (in particular, it is positive crossing), and 
\item $\Gamma $ has exactly one left cusp and one right cusp (in particular, they are the local minimum and local maximum of $\Gamma$ with respect to the $y$-coordinate). 
\end{itemize}
Let $\Lambda' $ be the Legendrian link which has the front projection obtained from $\Delta $ by removing $\Gamma $ and its adjacent crossings. 
Then, $\Lambda \succ_{\Sigma} \Lambda '$ with an exact Lagrangian cobordism $\Sigma$. 
\end{lem}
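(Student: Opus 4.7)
The plan is to build an exact Lagrangian cobordism from $\Lambda'$ up to $\Lambda$ by assembling the elementary pieces supplied by Theorem~\ref{thm:tool}: one $0$-handle that reintroduces $\Gamma$ as a standard Legendrian unknot, followed by one $1$-handle for each crossing of $\Delta$ that was adjacent to $\Gamma$. The three hypotheses on $\Gamma$ are tailored to the three things one must check in order to run this scheme cleanly: innermostness gives room in the plane to insert the $0$-handle, the cusp-count hypothesis makes the resulting unknot fit the shape of $\Gamma$, and the orientation hypothesis at every adjacent crossing matches the local model for the $1$-handle shown on the right of Figure~\ref{fig:handle}.

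First I would set up the geometry. Since $\Gamma$ is innermost, the bounded region $R$ it cuts off in the projection plane contains no arc of $\Delta$, so after deleting $\Gamma$ and its adjacent crossings the entire region $R$ (slightly enlarged to cover the crossings) is vacant in the front of $\Lambda'$. I would attach a $0$-handle inside $R$, producing a disjoint Legendrian unknot $U$ with $tb(U)=-1$ and $rot(U)=0$, whose front is an eye with exactly one left cusp and one right cusp. By the third hypothesis $U$ has the same cusp data as $\Gamma$, and a Legendrian isotopy places its two smooth arcs and two cusps exactly where the corresponding features of $\Gamma$ sat in $\Delta$.

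Next, let $c_1, \dots, c_n$ be the crossings of $\Delta$ adjacent to $\Gamma$, listed in cyclic order along $\Gamma$. At each $c_i$ a strand of $\Gamma$ met a strand of a neighbouring Seifert circle, and after our deletion these are now two nearby strands — one on $U$, one on $\Lambda'$. By the second hypothesis both strands point the same way with respect to the $y$-coordinate, so the local front near $c_i$ is exactly the left-hand picture of Figure~\ref{fig:handle}. Attaching a $1$-handle there turns this configuration into the right-hand picture, i.e.\ it creates a positive crossing reproducing $c_i$. Performing this $1$-handle attachment for every $c_i$ yields a Legendrian link whose front coincides, up to Legendrian isotopy, with $\Delta$, hence whose isotopy class is $\Lambda$.

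The main technical obstacle is combinatorial: one must verify that the $1$-handles can be carried out in some order so that each attachment takes place in a neighbourhood free of the arcs introduced by previous attachments, and that no spurious crossings are created during the process. The cusp-uniqueness hypothesis on $\Gamma$ lets us split the $c_i$'s into those meeting the top arc and those meeting the bottom arc of $\Gamma$, and then process each arc monotonically in $y$; because $\Gamma$ is innermost there are no arcs of $\Lambda'$ between $U$ and the strands to which we are attaching, so each $1$-handle is genuinely local. Exactness of the total cobordism is automatic from Theorem~\ref{thm:tool}, since each $0$-handle, $1$-handle and Legendrian isotopy involved is itself exact, and exact Lagrangian cobordisms compose.
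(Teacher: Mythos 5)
Your overall architecture is the same as the paper's, run in the opposite direction: a $0$-handle reintroduces $\Gamma$ as a standard unknot, and the adjacent crossings are then dealt with one at a time, with innermostness guaranteeing that each step is local. (The paper organizes this as an induction on the number $c$ of crossings adjacent to $\Gamma$, peeling off the lowest one at each step, but that is only a bookkeeping difference.) The problem is the elementary step.

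The genuine gap is your claim that ``attaching a $1$-handle there turns this configuration into the right-hand picture, i.e.\ it creates a positive crossing reproducing $c_i$.'' That is not what the $1$-handle move of Theorem~\ref{thm:tool} does. The pinch move relates two \emph{anti-parallel} horizontal strands to a pair of opposing cusps; the orientation reversal at a cusp forces the two strands in the local model to point in opposite $y$-directions, and the move never produces a crossing of the front. By your second hypothesis the two strands at each $c_i$ are \emph{coherently} oriented, so no single oriented $1$-handle can reinstate $c_i$; this is precisely the orientation mismatch that makes the lemma nontrivial. The paper's Figure~\ref{fig:removing} resolves it by reinstating each crossing with a composite move: a $0$-handle births an auxiliary small unknot near the crossing site, two $1$-handles splice it into the two coherently oriented strands, and a Legendrian isotopy then exhibits the crossing. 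Note that the Euler characteristic count does not detect your error --- one saddle per crossing gives $\chi=1-k$ for the whole cobordism, and so does the correct recipe of $(1\ \text{birth}) + 2\ \text{saddles}$ per crossing plus the final birth --- so the numerology is consistent either way; it is the local front model that fails. To repair your argument, replace each single $1$-handle by the composite of Figure~\ref{fig:removing} (or simply quote the paper's induction), after which the rest of your write-up, including the ordering of the attachments along the top and bottom arcs of $\Gamma$ and the exactness of the composite cobordism, goes through.
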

\begin{proof}
This proof is essentially due to Hayden and Sabloff \cite{Hayden-Sabloff}. 
Let $c$ be the number of the crossings adjacent to $\Gamma $. 
We prove by induction on $c$. 
If $c=0$, by Theorem~\ref{thm:tool} ($0$-handle attaching), we obtain $\Lambda \succ_{\Sigma} \Lambda '$. 
Suppose that Lemma~\ref{lem:seifert} is true if $c<k$. 
Let $c=k$. 
Let $\Delta ''$ be the front projection  of a Legendrian link $\Lambda ''$ obtained by removing the lowest (positive) crossing adjacent to $\Gamma $ with respect to the $y$-coordinate. 
Then, by Figure~\ref{fig:removing}, we see that $\Lambda \succ_{\Sigma'} \Lambda ''$. 
By the induction hypothesis, $\Lambda'' \succ_{\Sigma''} \Lambda '$. 
Hence, $\Lambda \succ_{\Sigma} \Lambda '$. 
In this proof, we only use Theorem~\ref{thm:tool}. 
Hence, Lagrangian cobordisms are all exact. 
\begin{figure}[h]
\begin{center}
\includegraphics[scale=0.4]{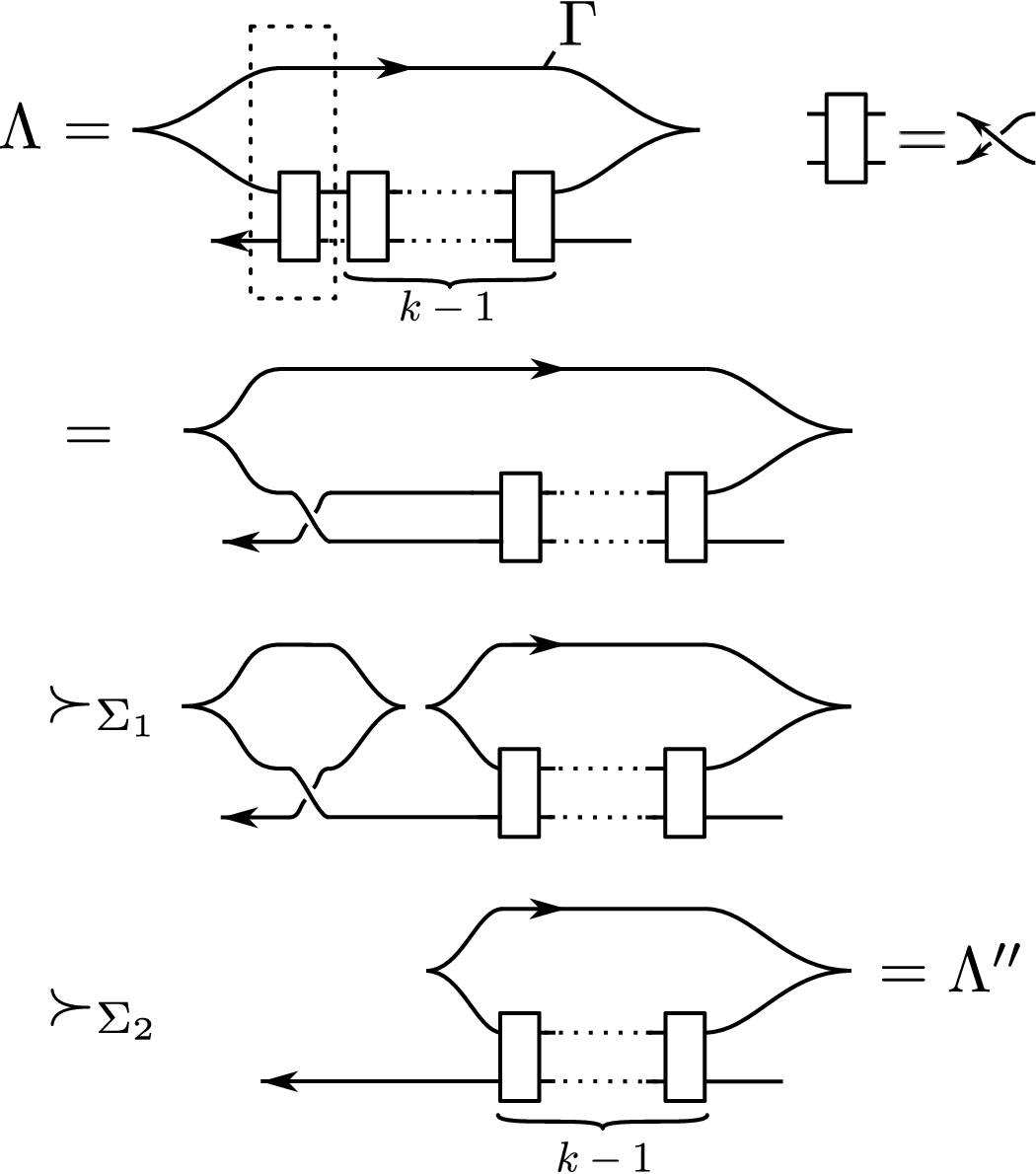}
\end{center}
\caption{An exact Legendrian cobordism from $\Lambda ''$ to $\Lambda $. In the third line, we use a $1$-handle attaching. In the fourth line, we use a Legendrian isotopy. In this picture, for simplicity, we suppose that there is no crossing oriented upward. In the case where there are some crossings oriented upward, we can construct a Legendrian cobordism similarly. }
\label{fig:removing}
\end{figure}
\end{proof}
\section{Bunching deformation}\label{sec:bunching}
In this section, we recall an operation called \textit{bunching deformation} \cite{S-Yamada}. 
\par
Two disjoint oriented circles on $\mathbf{S}^2=\mathbf{R}^2\cup \{\infty\}$ are {\it coherent} if they are homologous on $A$, where $A$ is the annulus bounded by the circles on $\mathbf{S}^2$.  
Let $D$ be a link diagram, and $C_1$ and $C_{2}$ be distinct Seifert circles of $D$. 
Suppose that $C_1$ and $C_{2}$ are not coherent and there is a band $b$ on $\mathbf{S}^2$ such that 
$b\cap D=\partial b\cap (C_1\cup C'_2)=d_1\cup d_2$, where $C'_2$ is a slight large copy of $C_{2}$, $d_1$ is a subarc of $C_1$ and $d_2$ is a subarc of $C'_2$. 
Put $C'_1=C_{1}\cup C'_2\cup \partial b\setminus (d_1\cup d_2)$.  
Then, we call the operation replacing $C_1$ with $C'_1$ by a {\it bunching deformation along $b$} (see Figure~\ref{fig:bunching}). 
This deformation corresponds to the ``bunching operation of type II" \cite{S-Yamada}. 
It is well known that by using the bunching deformation, Yamada \cite{S-Yamada} proved that the minimal number of Sifert circles of a link equals the minimal braid index of the link.  
By utilizing this deformation, Tanaka \cite{T-Tanaka} found a Legendrian representative of a positive link which attains the maximal Thurston-Bennequin number.  
In order to prove our results, we apply Tanaka's technique to almost positive diagrams. 
\begin{figure}[h]
\begin{center}
\includegraphics[scale=0.3]{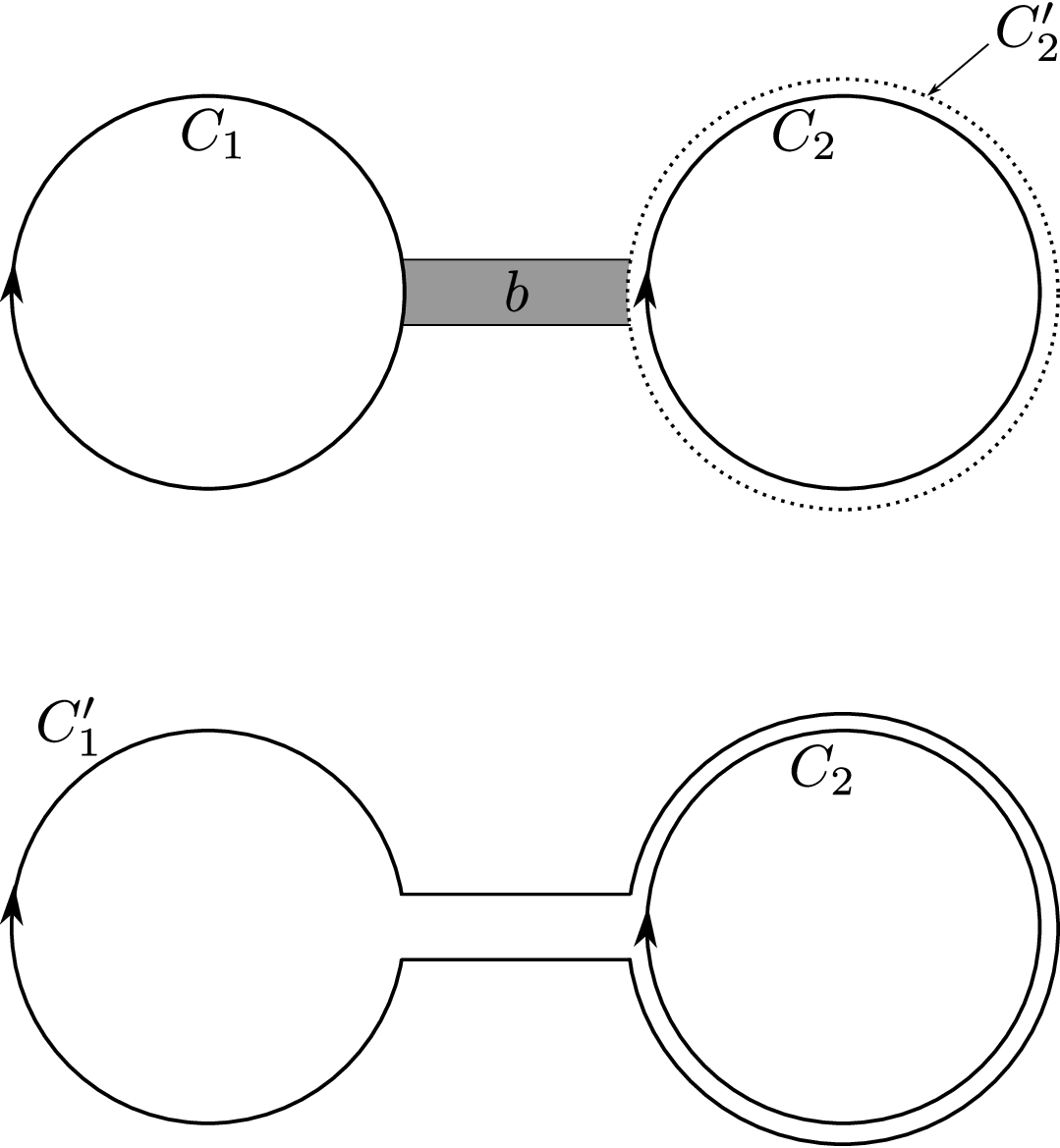}
\end{center}
\caption{Bunching deformation}
\label{fig:bunching}
\end{figure}
\begin{lem}\label{lem:deformation}
Let $D$ be an almost positive diagram with the negative crossing $p$. 
Then we can deform $D$ into an almost positive diagram $D'$ on the $(y,z)$-plane satisfying the following: 
\begin{itemize}
\item[(1)] each crossing is oriented downward with respect to the $y$-coordinate, 
\item[(2)] each Seifert circle has exactly one local maximum and one local minimum with respect to the $y$-coordinate, 
\item[(3)] the negative crossing $p$ is the highest crossing with respect to the $y$-coordinate, 
\item[(4)] the two Seifert circles connected by the negative crossing $p$ are not nested. 
\end{itemize}
\end{lem}
\begin{proof}
Let $D$ be an almost positive diagram with the negative crossing $p$. 
Put $D$ on the $(y,z)$-plane so that the two Seifert circles connected by the negative crossing $p$ are not nested and so that they are outermost Seifert circles  (see (iii) in Figure~\ref{fig:bunching3}). 
Then, connect $p$ and the point at infinity by a path $l$ on $\mathbf{S}^2=\mathbf{R}^2\cup \{\infty\}$ (see (iv) in Figure~\ref{fig:bunching3}). 
\par 
For the diagram, apply bunching deformations until one can, where 
\begin{itemize}
\item the bands used in the bunching deformations are on $\mathbf{S}^2\setminus l$ and  
\item 
if we need to apply a bunching deformation appearing one of the two Seifert circles connected by $p$, denote the Seifert circle by $S_{p}$, we apply the bunching deformation so that $S_p$ is outermost. In other words, $S_p$ plays a role of $C_1$ in the definition of the bunching deformation. 
\end{itemize}
Note that in the resulting diagram, the two Seifert circles connected by $p$ are the only outermost Seifert circles 
(see (v) in Figure~\ref{fig:bunching3}, which is obtained from (iv) by applying bunching deformations along the red dotted arcs). 
Here, we draw the subrarcs $d_1$ which are used in the bunching deformations as blue dotted arcs (in (v)-(viii) in Figure~\ref{fig:bunching3}, we draw the subarcs by the blue dotted arcs). 
\par 
Then, by isotopy on $\mathbf{S}^2\setminus l$, we can deform the diagram so that it is presented by the closure of a braid (see (vi) in Figure~\ref{fig:bunching3}). 
Notice that the closure is taken in $\mathbf{S}^2\setminus l$. 
After the isotopy, the negative crossing $p$ may not be the highest crossing with respect to the $y$-coordinate (the horizontal coordinate). 
In that case, by taking an appropriate conjugate for the braid, deform the diagram so that $p$ is the highest crossing with respect to the $y$-coordinate (see (vii) in Figure~\ref{fig:bunching3}). 
\par
Now this diagram satisfies $(1)$--$(4)$ but it is not almost positive. 
If we can apply the inverse of the bunching deformations which preserves $(1)$--$(4)$, we finish the proof. 
However, in general, the blue dotted arcs $d_1$ have some local maxima and minima with respect to the $y$-coordinate, and the inverse of the bunching deformations along the blue dotted arcs $d_1$ does not preserve $(2)$. 
So, we delete these maxima and minima as follows: 
for each blue dotted arc $d_1$, deform $d_1$ so that the two endpoints lie on the braid, and consider the disk bounded by the union of $d_1$ and the line segment connecting the two endpoints of $d_1$ (see the gray area in (vii) in Figure~\ref{fig:bunching3}). 
Firstly, we take an outermost one of such disks and shrink the disk by an isotopy which fixes the line segment until the corresponding blue dotted arc $d_1$ has no local maximum and local minimum with respect to the $y$-coordinate (see (viii) in Figure~\ref{fig:bunching3}). 
Here, ``outermost" means the disk is not contained in other disks. 
Note that such an isotopy does not deform the diagram at the outside of the disk. 
Secondly, we take an outermost one of the remaining disks and deform similarly.  
By repeating this deformation inductively, we delete all local maxima and local minima of all blue dotted arcs. 
\par 
Finally, by the inverse of the bunching deformations along the blue dotted arcs, we obtain the desired diagram (see (ix) in Figure~\ref{fig:bunching3}). 
\if{
Put $D$ on the $(y,z)$-plane so that 
the two Seifert circles connected by the negative crossing $p$ are not nested 
and $p$ is the highest crossing with respect to the $y$-coordinate. 
Then, we can connect $p$ and the point at infinity by a path $l$ on $\mathbf{S}^2=\mathbf{R}^2\cup \{\infty\}$. 
By applying bunching deformations and the inverses of the bunching deformations as in \cite{T-Tanaka} on $\mathbf{R}^2\setminus l$, we obtain the desired diagram $D'$. 
For example, see Figure~\ref{fig:bunching2}. 
In Figure~\ref{fig:bunching2}, we draw a crossing by a rectangle. 
In the third picture, we deform the diagram so that the two Seifert circles connected by the negative crossing $p$ are not nested. 
In the fourth picture, we rotate and deform the diagram so that the negative crossing is the highest crossing with respect to the $y$-coordinate (the horizontal coordinate). 
By applying bunching deformations along the red dotted lines in the fourth picture, we obtain the fifth picture. 
The blue dotted lines in the fifth and sixth pictures are marks for the inverse of the bunching deformations. 
By an isotopy, we obtain a braid presentation for the link (the sixth picture). 
Here, we note that all crossings are oriented downward with respect to the $y$-coordinate. 
Finally, by the inverse of the bunching deformations, we obtain the desired diagram. 
}\fi
\end{proof}
\begin{figure}[hp]
\begin{center}
\includegraphics[scale=0.673]{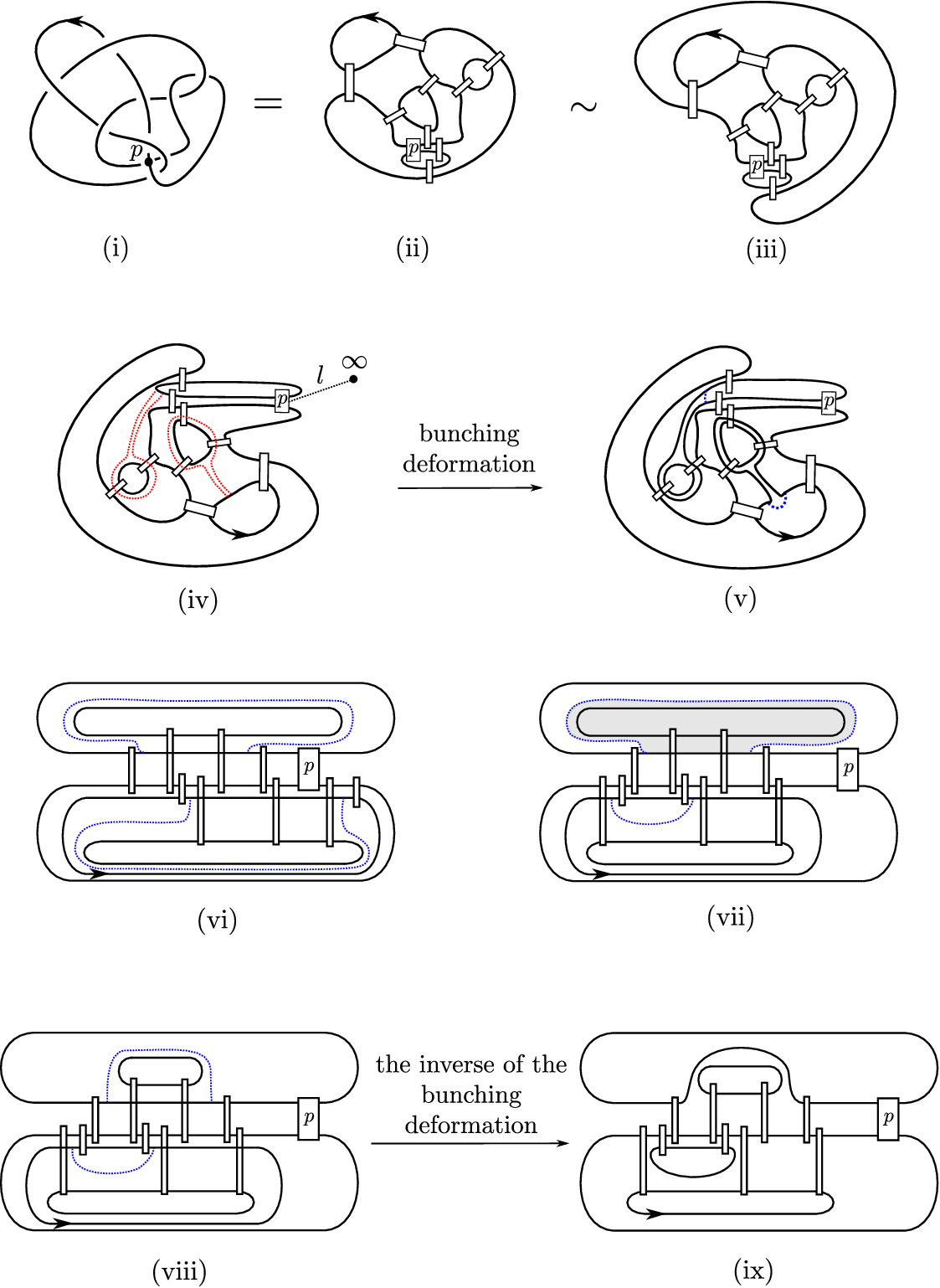}
\end{center}
\caption{(color online) Sketch for the proof of Lemma~\ref{lem:deformation}. The picture (i) is $D$. In pictures (ii)-(ix), we draw a crossing derived from a crossing of $D$ by a rectangle. }
\label{fig:bunching3}
\end{figure}
%
\section{The Lagrangian fillability of almost positive links}\label{sec:main}
In this section, we consider the Lagrangian fillability of almost positive links. 
\par
Let $D$ be an almost positive link diagram of a link $L$ with negative crossing $p$. 
Then, $D$ satisfies one of the following properties: 
\begin{itemize}
\item[(P$1$):] there is no positive crossing joining the two Seifert circles which are connected by $p$ (see the left of Figure~\ref{fig:picture1}), 
\item[(P$2$):] there is a positive crossing joining the two Seifert circles which are connected by $p$ (see the right of Figure~\ref{fig:picture1}). 
\end{itemize}
In \cite{stoimenow1}, Stoimenow considered the two properties and computed the genera of almost positive knots. 
In \cite{tagami5}, the author also considered these properties and computed the Rasmussen invariants and $4$-ball genera of almost positive knots (see also \cite{abe-tagami1}). 
By the following result, we see that if $D$ satisfies (P2), then $L$ is Lagrangian fillable. 
\begin{figure}[h]
\begin{center}
\includegraphics[scale=0.7]{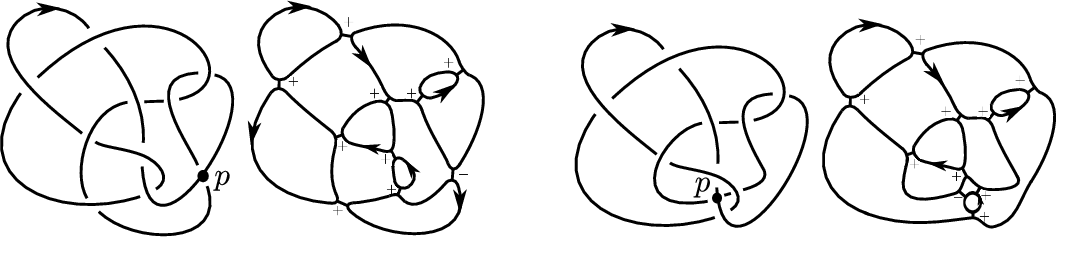}
\end{center}
\caption{Two almost positive diagrams of $10_{145}$, which is an almost positive knot. 
The left diagram satisfies (P$1$). 
The right diagram satisfies (P$2$). }
\label{fig:picture1}
\end{figure}
\begin{thm}[Theorem~\ref{thm:fillable1}]\label{lem:fillable1}
Let $D$ be an almost positive link diagram of a link $L$ with negative crossing $p$ with the property $(P2)$. 
Then $L$ is exact Lagrangian fillable. 
\end{thm}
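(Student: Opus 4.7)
The plan is to adapt Hayden--Sabloff's proof of Theorem~\ref{thm:Hayden-Sabloff} to the almost positive setting, exploiting (P$2$) to cancel the negative crossing $p$ against the positive crossing $q$ at the cost of a few extra Lagrangian handle moves.

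I would begin by placing $D$ in a Legendrian front in the standard Hayden--Sabloff form: each Seifert circle is drawn as a nested round curve with exactly one left and one right cusp, and each positive crossing sits at the local X-shape with both strands oriented up (or both oriented down), as required by Lemma~\ref{lem:seifert}. The negative crossing $p$, whose strands carry opposite orientations, does not fit this local model; it appears as a Legendrian negative crossing. Since (P$2$) supplies a positive crossing $q$ joining the same two Seifert circles $C_1$ and $C_2$, the arcs of $C_1$ and $C_2$ between $p$ and $q$ bound a bigon region in the plane, and I will use this bigon as the site of the cancellation.

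Next, I would perform a local Lagrangian cobordism that cancels the pair $\{p,q\}$. After stripping off, by Lemma~\ref{lem:seifert}, any innermost Seifert circles nested inside or near the bigon, a Legendrian isotopy brings $p$ and $q$ adjacent along the cleared strands of the bigon, and a short sequence of $1$-handle attachments from Theorem~\ref{thm:tool} eliminates the resulting $(p,q)$-clasp. The outcome is a Legendrian $\Lambda'$ whose front is a positive diagram, to which Theorem~\ref{thm:Hayden-Sabloff} (equivalently, iteration of Lemma~\ref{lem:seifert}) applies to produce an exact Lagrangian filling. Concatenating the two cobordisms yields an exact Lagrangian filling of $L$, and quasipositivity then follows from \cite{Eliashberg1, Boileau-Orevkov} as recalled in the introduction.

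The main difficulty I anticipate is making this cancellation step precise. Seifert circles lying outside the $(p,q)$-bigon can obstruct the Legendrian isotopy that brings $p$ and $q$ adjacent, and the cusp structure around $p$ has to be compatible with the local $1$-handle model of Theorem~\ref{thm:tool}. Handling this probably requires an interleaved induction in which one alternates innermost-Seifert-circle reductions (Lemma~\ref{lem:seifert}) with partial cancellation moves, reducing to a minimal almost positive configuration in which the explicit Lagrangian cancellation of $\{p,q\}$ via $1$-handles is manifestly available. Verifying that this reduction always terminates at such a configuration---and that the signs of $p$ and $q$ together with the orientations at the bigon make the $1$-handle move legal---is the central technical point of the argument.
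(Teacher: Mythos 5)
Your outline matches the paper's strategy: put $D$ into a normalized front (the paper uses the bunching deformation / Ng's Mondrian construction to get conditions (1)--(3), including that the negative crossing is the highest one), peel off innermost Seifert circles by Lemma~\ref{lem:seifert}, and use (P2) to cancel the negative crossing $p$ against the positive crossing $q$ joining the same two Seifert circles, reducing to the positive case. However, your cancellation mechanism is wrong. Since $p$ is negative and $q$ is positive and they join the same two Seifert circles, once they are brought adjacent they form a Reidemeister II pair, not a clasp (a clasp consists of two crossings of the \emph{same} sign). The paper removes the pair by a Legendrian Reidemeister II move, i.e.\ a Legendrian \emph{isotopy}, at zero cobordism cost; this is exactly what happens in the base case $s=2$ and in the degenerate case of the induction. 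Your proposed ``short sequence of $1$-handle attachments'' is not a legal substitute: the $1$-handle of Theorem~\ref{thm:tool} is an oriented saddle with a specific local front model, and attaching saddles in the $(p,q)$-bigon does not produce the front with $p$ and $q$ deleted --- it changes the link (splitting components or altering genus) rather than performing the cancellation. If the pair really were a clasp, a band/handle would be the right tool, but then the two crossings would have the same sign, contradicting the hypothesis.

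The second issue is that you explicitly defer the ``central technical point'' --- showing that nothing obstructs bringing $p$ and $q$ together --- to an unspecified ``interleaved induction.'' This is precisely where the paper does the work: condition (3) arranges that the negative crossing is the highest crossing of the front, and the induction on the number $s$ of Seifert circles shows that for $s\geq 3$ one can always find an innermost Seifert circle \emph{not adjacent to the negative crossing} to strip off via Lemma~\ref{lem:seifert}; in the exceptional case where no such circle exists, the negative crossing is already removable by the Legendrian Reidemeister II move and one concludes as in the $s=2$ case. Without this normalization and case analysis (and with the handle-based cancellation replaced by the isotopy), your argument does not close up into a proof.
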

\begin{proof}
By Lemma~\ref{lem:deformation}, the diagram $D$ can be transformed into an almost positive diagram $D'$ on the $(y,z)$-plane satisfying the following: 
\begin{itemize}
\item[(0)] the property (P$2$), 
\item[(1)] each crossing is oriented downward with respect to the $y$-coordinate, 
\item[(2)] each Seifert circle has exactly one local maximum and one local minimum with respect to the $y$-coordinate, 
\item[(3)] the negative crossing $p$ is the highest crossing with respect to the $y$-coordinate. 
\item[(4)] the two Seifert circles connected by the negative crossing $p$ are not nested. 
\end{itemize}
We remark that $D'$ satisfies $(0)$ because $D$ and $D'$ are isotopic on $\mathbf{S}^2$ (see the proof of Lemma~\ref{lem:deformation}). 
In order to construct a Legendrian representative of $L$ with an exact Lagrangian filling, firstly, we deform $D'$ near the negative crossing as in Figure~\ref{fig:negative-crossing}. 
After this deformation, one of the Seifert circles connected by the negative crossing does not satisfy the condition $(2)$ anymore.  
Next, we deform $D'$ near each local maximum or local minimum as in Figure~\ref{fig:cusp}. 
Then, we obtain a front projection $\Delta $ of a Legendrian representative $\Lambda $ of $L$. 
\par
Let $s$ be the number of the  Seifert circles of $\Delta $. 
We prove that $\Lambda $ is exact Lagrangian fillable by the induction on $s$. 
\par
If $s=2$, the front projection $\Delta $ is as the left in Figure~\ref{fig:s=2}. 
By the Legendrian version of Reidemeister move II, we obtain another front projection $\Delta '$ of $\Lambda $ from $\Delta$ as the right in Figure~\ref{fig:s=2}. 
By Lemma~\ref{lem:seifert} (or \cite[Proof of Theorem 1.1 or Remark~3.2]{Hayden-Sabloff}), we see that the Legendrian link with the front projection $\Delta '$, that is $\Lambda $, is  exact Lagrangian fillable. 
\par
Suppose $s\geq 3$. 
Then, we can suppose that there exists a Seifert circle $\Gamma $ of $\Delta $ such that it is an innermost circle and it is not adjacent to the negative crossing. 
In fact, if there is no such circle, all Seifert circles except the two Seifert circles connected by the negative crossing contain the two circles. 
In that case, we can remove the negative crossing by the Legendrian version of Reidemeister move II and prove that $\Lambda $ is exact Lagrangian fillable by the same discussion as the case $s=2$. 
Otherwise, such a Seifert circle $\Gamma $ satisfies the assumption of Lemma~\ref{lem:seifert}. 
Hence, $\Lambda \succ_{\Sigma}\Lambda ' $, where $\Lambda '$ is a Legendrian link which has the front projection obtained from $\Delta $ by removing $\Gamma $ and its adjacent crossings. 
Note that this Lagrangian cobordism is exact. 
By the induction hypothesis, $\Lambda'$ is exact Lagrangian fillable, and so is $\Lambda$. 
\end{proof}
\begin{figure}[h]
\begin{center}
\includegraphics[scale=0.4]{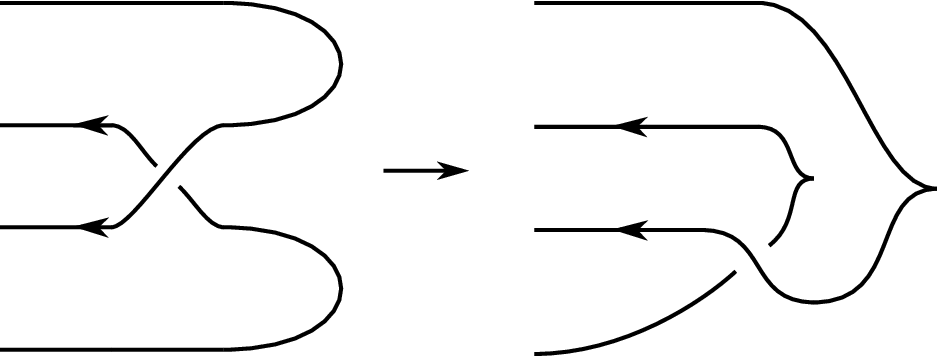}
\end{center}
\caption{Deformation near the negative crossing}
\label{fig:negative-crossing}
\end{figure}
\begin{figure}[h]
\begin{center}
\includegraphics[scale=0.5]{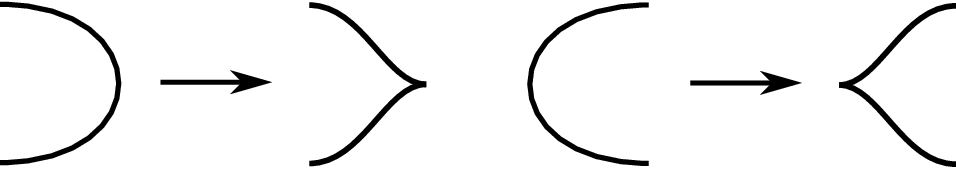}
\end{center}
\caption{}
\label{fig:cusp}
\end{figure}
\begin{figure}[h]
\begin{center}
\includegraphics[scale=0.65]{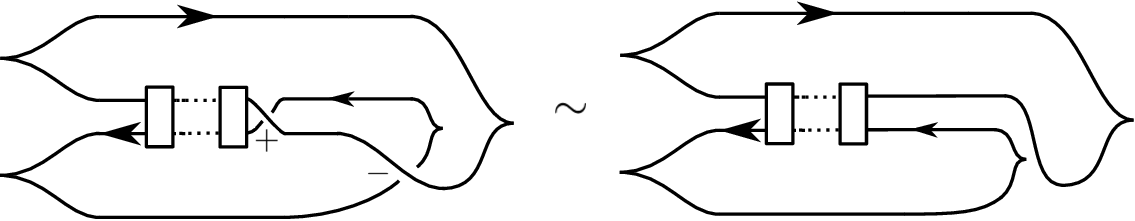}
\end{center}
\caption{
The proof of Theorem~\ref{thm:fillable1} for $s=2$. 
A box in this picture represents a positive crossing. 
The front projection $\Delta $ of the Legendrian link $\Lambda $ is the left. 
By using the Legendrian version of the Reidemeister move II, we obtain the right diagram $\Delta '$. }
\label{fig:s=2}
\end{figure}
\begin{cor}\label{cor:almost-equality}
Let $D$ be an almost positive knot diagram of a knot $K$ with the property $(P2)$. 
Then, we obtain 
\begin{align*}
\operatorname{TB}(K)+1=2\tau(K)=s(K)=2g_4(K)=2g_{3}(K)
&=-\max \operatorname{deg}_{v} P_{K}(v,z) \\
&=2g_{3}(D)-2,  
\end{align*}
where $g_{3}(D)$ is the genus of the Seifert surface obtained from $D$ by Seifert's algorithm. 
\end{cor}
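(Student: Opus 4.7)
The plan is to reduce Corollary~\ref{cor:almost-equality} to results already in hand. First I will apply Theorem~\ref{thm:fillable1} to conclude that $K$ is (exact) Lagrangian fillable, and then invoke Corollary~\ref{cor:fillable-equality} to obtain
\[
\operatorname{TB}(K)+1=2\tau(K)=s(K)=2g_4(K)=-\max\operatorname{deg}_{v} P_K(v,z).
\]
Theorem~\ref{thm:main1}, which is valid for every almost positive knot, inserts $2g_{3}(K)$ into this chain. What remains is the final identification $2g_4(K)=2g_{3}(D)-2$.

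For this I would use the Lagrangian filling $\Sigma$ explicitly constructed in the proof of Theorem~\ref{lem:fillable1}. By Chantraine's theorem recalled in the introduction, $g_4(K)=g(\Sigma)$, so it suffices to show $g(\Sigma)=g_{3}(D)-1$. Writing $c$ for the number of crossings of $D$ and $\sigma$ for the number of its Seifert circles, and using $2g_{3}(D)=c-\sigma+1$, this is equivalent to the handle count $\chi(\Sigma)=\sigma-c+2$.

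The bookkeeping runs parallel to the induction on $\sigma$ in the proof of Theorem~\ref{lem:fillable1}. In the base case $\sigma=2$, a Legendrian Reidemeister~II move cancels the negative crossing $p$ against the positive crossing $q$ supplied by (P2); the resulting positive $2$-circle diagram has $c-2$ crossings, and its filling is assembled from $2$ $0$-handles and $c-2$ $1$-handles, so $\chi=2-(c-2)=\sigma-c+2$. In the inductive step, Lemma~\ref{lem:seifert} removes an innermost Seifert circle $\Gamma$ adjacent to $c_{\Gamma}$ positive crossings via one $0$-handle and $c_{\Gamma}$ $1$-handles, contributing $1-c_{\Gamma}$ to $\chi$; since removing $\Gamma$ and its adjacent crossings passes from $(\sigma,c)$ to $(\sigma-1,c-c_{\Gamma})$, the induction hypothesis propagates the equality $\chi(\Sigma)=\sigma-c+2$.

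The main (minor) obstacle is this handle count, and in particular tracking that condition (P2) saves exactly the two $1$-handles (for $p$ and $q$) relative to what the naive Seifert algorithm on $D$ would use; this is precisely what makes $g(\Sigma)$ drop from $g_{3}(D)$ to $g_{3}(D)-1$. All the other equalities follow immediately from previously established results.
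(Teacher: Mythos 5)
Your proposal is mostly sound, and your third step is a genuinely different (and self-contained) route to the equality $2g_4(K)=2g_3(D)-2$: the paper does not count handles in the filling at all, but instead cites Stoimenow's result that $g_3(K)=g_3(D)-1$ for an almost positive diagram together with the author's earlier result from \cite{tagami5} that $g_4(K)=g_3(K)$ for almost positive knots, and then feeds everything into Corollary~\ref{cor:fillable-equality}. Your handle count ($\chi(\Sigma)=\sigma-c+2$, verified in the base case and propagated through Lemma~\ref{lem:seifert}) is arithmetically correct and, combined with Chantraine's theorem, recovers $g_4(K)=g_3(D)-1$ without appealing to those external results. That is a legitimate alternative for that particular equality.

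However, there is a genuine logical gap in your second step: you invoke Theorem~\ref{thm:main1} to insert $2g_3(K)$ into the chain, but in this paper Theorem~\ref{thm:main1} \emph{is} Theorem~\ref{thm:main1-1}, whose proof reads ``This follows from Corollary~\ref{cor:almost-equality} and Lemma~\ref{lem:almost-equality2}.'' Using it to prove Corollary~\ref{cor:almost-equality} is circular. Nor does your own handle count close the gap by itself: it gives $g_3(K)\geq g_4(K)=g_3(D)-1$, while the Seifert surface from $D$ gives $g_3(K)\leq g_3(D)$, so $g_3(K)$ could a priori still equal $g_3(D)$. To pin down $2g_3(K)=2g_3(D)-2$ you need an independent input, and the paper supplies it by citing Stoimenow (\cite[Corollary 5 and the proof of Theorems 5 and 6]{stoimenow1}), which shows $g_3(K)=g_3(D)-1$ directly; alternatively the result $g_4(K)=g_3(K)$ from \cite{tagami5} would do. Replace the appeal to Theorem~\ref{thm:main1} by one of these citations and your argument is complete.
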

\begin{proof}
Stoimenow \cite[Corollary $5$ and the proof of Theorems $5$ and $6$]{stoimenow1} proved that $g_{3}(K)=g_{3}(D)-1$. 
The author \cite{tagami5} proved that $g_4(K)=g_3(K)$ for any almost positive knot $K$. 
Hence, by Corollary~\ref{cor:fillable-equality}, we finish the proof. 
We remark that we obtain a $2$-graded normal ruling of $\Lambda $ by switching all positive crossings except the positive crossing given in the property $(P2)$, where $\Lambda $ is the exact Lagrangian fillable Legendrian knot constructed in the proof of Theorem~\ref{lem:fillable1}. 
\end{proof}
%
\begin{rem}
The author \cite[Remark~3.1]{tagami5} conjectured that any almost positive diagram of an almost positive knot satisfies (P$1$). 
However, it is false. In fact, it is known that $10_{145}$ is almost positive. On the other hand, $10_{145}$ has an almost positive diagram satisfying (P$2$) (see Figure~\ref{fig:picture1}).
In \cite[Theorem~1.4]{stoimenow4} Stoimenow proved that 
there exist almost positive knots with either none or all of
their almost positive diagrams having minimal genus. 
More precisely, Stoimenow proved that there are two almost positive knots $K_{1}$ and $K_{2}$ such that any almost positive diagram $D_{1}$ of $K_1$ satisfies (P$1$) and any almost positive diagram $D_{2}$ of $K_2$ satisfies (P$2$). 
By \cite[Corollary~5]{stoimenow1}, we have $g(D_{1})=g_{3}(K_{1})$ and $g(D_{2})-1=g_{3}(K_{2})$. 
This is the negative answer to \cite[Question~3]{stoimenow1} which asks whether any almost positive link has an almost positive diagram of minimal genus. 
\end{rem}
%
%
%
\section{Non-positive, Lagrangian fillable and strongly quasipositive knots}\label{sec:infinite}
The Lagrangian fillabilities of knots imply their quasipositivities. 
On one hand, Hayden and Sabloff \cite{Hayden-Sabloff} mentioned that Lagrangian fillability and strongly quasipositivity are independent conditions. 
The most famous class of Lagrangian fillable and strongly quasipositive knots are positive knots. 
Then, it is a natural question whether any Lagrangian fillable and strongly quasipositive knot is a positive knot. 
In this section, we give infinitely many almost positive (in particular, non-positive), Lagrangian fillable and strongly quasipositive knots. 
\begin{thm}\label{thm:almost-lagrangian}
For any $n\in\mathbf{Z}_{>0}$, the knot $K_{n}$ depicted in Figure~\ref{fig:Kn2} is almost positive, exact Lagrangian fillable and strongly quasipositive knot. 
\end{thm}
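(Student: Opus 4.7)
The plan is to verify the three properties of $K_n$ in turn, using the combinatorial structure of the diagram in Figure~\ref{fig:Kn2} together with results already established in this paper. The key leverage is Theorem~\ref{lem:fillable1}: once we exhibit the diagram as almost positive of type (P$2$), Lagrangian fillability is immediate, and Corollary~\ref{cor:almost-equality} pins down all the standard invariants of $K_n$ in one stroke.

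First, I would read off from Figure~\ref{fig:Kn2} that $K_n$ admits a diagram $D_n$ with exactly one negative crossing, and that some positive crossing of $D_n$ joins the same two Seifert circles as the negative crossing; in other words, $D_n$ is an almost positive diagram satisfying (P$2$). Applying Theorem~\ref{lem:fillable1} to $D_n$ then gives at once that $K_n$ is exact Lagrangian fillable (and in particular quasipositive), while Corollary~\ref{cor:almost-equality} yields the genus equalities $2g_4(K_n)=2g_3(K_n)=-\max\operatorname{deg}_v P_{K_n}(v,z)=2g_3(D_n)-2$.

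Second, to conclude that $K_n$ is genuinely almost positive rather than merely admitting an almost positive diagram, I must rule out positivity. Here I would invoke a HOMFLYPT- or Alexander-polynomial obstruction of Cromwell type, as in the proof of Corollary~\ref{cor:homogeneous}: any positive knot is homogeneous, and the class of positive knots is cut out among almost positive knots by explicit polynomial invariants. Computing a suitable coefficient (for example, the leading or subleading coefficient of $P_{K_n}(v,z)$, or of the Conway polynomial) in closed form as a function of $n$ and showing that it violates the positivity obstruction for every $n\in\mathbf{Z}_{>0}$ then suffices. Third, for strong quasipositivity the most direct route is to exhibit an explicit strongly quasipositive surface for $K_n$ coming from the diagram, namely by realizing $K_n$ as the closure of a positive product of Rudolph's band generators $\sigma_{i,j}$ read off from $D_n$. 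As a fallback, if one can verify that $K_n$ is fibered (by checking that the Alexander polynomial is monic of the expected degree and that the Seifert surface obtained from $D_n$ is a fiber surface), then Hedden's theorem combined with $\tau(K_n)=g_3(K_n)$ from Theorem~\ref{thm:main1} forces strong quasipositivity without any further surface construction.

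The principal obstacle is the \emph{uniformity in $n$} of the last two steps: any argument must hold for every positive integer $n$ rather than for a finite list of small cases, and in particular the non-positivity obstruction and the strongly quasipositive braid word must both be exhibited in closed form as functions of $n$. Producing an inductive construction of a strongly quasipositive Seifert surface from $D_n$, or equivalently a uniform verification of fibredness, is where the real combinatorial work lies; the Lagrangian fillability itself, by contrast, is essentially a direct application of Theorem~\ref{lem:fillable1}.
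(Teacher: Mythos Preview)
Your route to exact Lagrangian fillability is the same as the paper's: find an almost positive diagram of type (P$2$) and invoke Theorem~\ref{lem:fillable1}. One caution: the diagram drawn in Figure~\ref{fig:Kn2} is not asserted to satisfy (P$2$) on its face; the paper supplies a separate diagram (Figure~\ref{fig:Kn5}) witnessing (P$2$), so ``reading off'' (P$2$) from Figure~\ref{fig:Kn2} is not quite enough.

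For the remaining two properties the paper is far more economical than your plan. Almost positivity (i.e.\ non-positivity) of $K_n$ is not reproved but cited from Stoimenow \cite[Example~6.1]{stoimenow3}. Strong quasipositivity is obtained by citing an explicit Seifert surface of $K_n$ from \cite[Figure~17]{abe-tagami1} that is a Murasugi sum of quasipositive surfaces, and then applying Rudolph's theorem \cite{Rudolph2} that Murasugi sums of quasipositive surfaces are quasipositive. Both citations hold for all $n$ at once, so the uniform-in-$n$ difficulty you flag simply evaporates.

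Your sketched non-positivity argument has a genuine gap. Cromwell's theorem, as used in the proof of Corollary~\ref{cor:homogeneous}, says that a \emph{homogeneous} knot with $2g_3=-\max\deg_v P$ is positive; it does not furnish a polynomial obstruction to positivity for an arbitrary knot. Invoking it here would require first knowing $K_n$ is homogeneous, which is precisely what Corollary~\ref{cor:homogeneous} rules out once almost positivity is established---so the logic is circular. A working non-positivity argument needs a different obstruction (Stoimenow's original argument uses other tools). Your fallback route to strong quasipositivity via fibredness and Hedden's theorem is sound in principle, but establishing fibredness uniformly in $n$ is itself nontrivial; the Murasugi-sum argument the paper cites is more direct and avoids this.
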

\begin{proof}
Stoimenow \cite[Example~6.1]{stoimenow3} proved that $K_{n}$ is almost positive. 
Abe and the author \cite[Figure~17]{abe-tagami1} gave a Seifert surface of $K_n$ which is represented by a Murasugi sum of some quasipositive surfaces. 
By Rudolph's work \cite{Rudolph2}, such a surface is a quasipositive, in particular, $K_n$ is strongly quasipositive. 
Finally, we prove that $K_n$ is exact Lagrangian fillable. 
By Figure~\ref{fig:Kn5}, the knot $K_{n}$ has an almost positive diagram satisfying (P$2$). 
By Theorem~\ref{thm:fillable1}, it is exact Lagrangian fillable. 
\end{proof}
\begin{figure}[h]
\begin{center}
\includegraphics[scale=0.86]{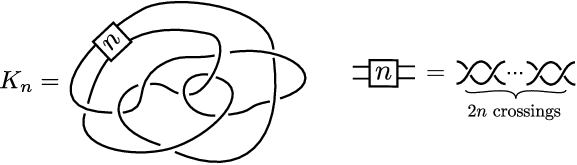}
\end{center}
\caption{An almost positive knot introduced by Stoimenow \cite[Example~6.1]{stoimenow3}. }
\label{fig:Kn2}
\end{figure}
\begin{figure}[h]
\begin{center}
\includegraphics[scale=0.9]{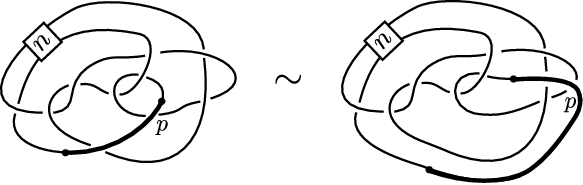}
\end{center}
\caption{The knot $K_n$ has an almost positive diagram satisfying (P$2$) (right).  The crossing $p$ is the negative crossing. 
}
\label{fig:Kn5}
\end{figure}

\begin{rem}
Recently, Feller, Lewark and Lobb \cite{Feller-Lewark-Lobb} proved that almost positive links are strongly quasipositive. Their result gives a positive answer to a question given by Stoimenow \cite[Question~4]{stoimenow1}. 

\end{rem}

\section{Further discussion}\label{sec:discussion}
In this section, we consider the positivity and Lagrangian fillability of links. 
It is known that positive links are homogeneous and strongly quasipositive (see \cite{homogeneous, nakamura1, Rudolph3}) and the converse is also true (see \cite{baader} and see also
 \cite{abe2, abe-tagami1}). 
Hayden and Sabloff \cite{Hayden-Sabloff} proved that positive links are exact Lagrangian fillable, and Lagrangian fillable links are quasipositive. 
\par
A'Campo \cite{ACampo} defined divide links. 
Gibson and Ishikawa \cite{Gibson-Ishikawa} constructed free divide links as an extension of divide links. 
Kawamura \cite{Kawamura3} defined the class of graph divide links, which is an extension of the class of free divide links, and proved that they are quasipositive. 
We note that the quasipositivity of free divide links was proved by Kawamura \cite{Kawamura4} before \cite{Kawamura3}. 
Abe and the author \cite[Lemma~3.2]{abe-tagami2} proved that the original divide links are strongly quasipositive. 
Tomomi Kawamura taught the author that this fact had been proved by Mikami Hirasawa (see also \cite[Remark~6.9]{Kawamura3}). 
Ishikawa \cite{Ishikawa1} proved that the maximal Thurston-Bennequin number of any graph divide link is equal to its slice Euler characteristic. 
This means graph divide links satisfy a necessary condition to be Lagrangian fillable. 
\par
Hence we obtain the following, where $P$ stands for positive, $LF$ Lagrangian fillable, $H$ homogeneous, $QP$ quasipositive, $SQP$ strongly quasipositive, and $Div$ divide: 
\[
\begin{matrix}
 \{H \text{ links}\} &&\{Div \text{ links}\}&\subset&\{\text{graph }Div \text{ links}\}\\
\cup  &&\cap&&\cap\\
\{ H \text{ and } SQP \text{ links} \}    & \subset & \{SQP \text{ links} \} &\subset & \{ QP \text{ links}\}.  \\
||&&  &  &\cup \\
\{P \text{ links} \}&   \subset &\{\text{exact\ } LF \text{ links} \} &\subset &\{LF \text{ links}\}
\end{matrix}
\]
Then, we can consider the following questions: 
\begin{question}
Are there non-positive and non-almost positive links in the set $\{LF \text{ links} \}\cap \{SQP \text{ links} \}$?
\end{question}
\begin{question}
Are there non-positive links in the set $\{LF \text{ links} \}\cap \{H \text{ links} \}$? 
\end{question}
\begin{question}
Is the set $\{\text{(graph)\ }Div \text{ links} \}$ contained in $\{LF \text{ links} \}$? 
\end{question}
%
%
In Theorem~\ref{thm:almost-lagrangian}, we give infinitely many almost positive (in particular non-positive), Lagrangian fillable and strongly quasipositive knots. 
There are non-positive, non-almost positive and Lagrangian fillable links (for example $8_{21}$, which is a graph divide knot).  
The author does not know any examples of non-positive, non-almost positive, Lagrangian fillable and strongly quasipositive links. 
\par
On alternating and Lagrangian fillable knots, the following is proved by Cornwell, Ng and Sivek \cite{Cronwell-Ng-Sivek}. 
\begin{thm}[{\cite[Theorem~4.3]{Cronwell-Ng-Sivek}}]
An alternating knot is Lagrangian fillable if and only if it is a positive knot. 
\end{thm}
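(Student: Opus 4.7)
The statement is an ``if and only if'', and both directions can be handled with tools that are already assembled in this paper, so the plan is essentially to string together three known results.

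For the forward direction (positive implies Lagrangian fillable), I would simply invoke Theorem~\ref{thm:Hayden-Sabloff} of Hayden--Sabloff: every positive link, in particular every positive alternating knot, is (exact) Lagrangian fillable. Nothing further is needed.

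For the converse, suppose $K$ is alternating and Lagrangian fillable. The plan is to reproduce the argument used earlier in the proof of Corollary~\ref{cor:homogeneous}, with ``alternating'' playing the role of ``homogeneous.'' First, because $K$ is alternating, it is homogeneous (this is Cromwell's observation, \cite{homogeneous}). Second, because $K$ is Lagrangian fillable, Corollary~\ref{cor:fillable-equality} supplies the HOMFLYPT equality
\[
-\max \operatorname{deg}_{v} P_{K}(v,z) = 2g_{3}(K).
\]
Third, \cite[Theorem~4]{homogeneous} of Cromwell states that any homogeneous knot satisfying this very equality must be positive. Chaining these three facts yields that $K$ is positive, completing the converse.

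I do not expect a real obstacle here: the content is entirely in the pre-existing results (Theorem~\ref{thm:Hayden-Sabloff}, Corollary~\ref{cor:fillable-equality}, and Cromwell's characterization of positive knots among homogeneous ones), and the only point to verify is that Corollary~\ref{cor:fillable-equality} applies as soon as Lagrangian fillability is assumed, which is immediate. The proof therefore reduces to a two-line citation in each direction.
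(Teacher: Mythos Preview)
Your ``if'' direction is fine, but the converse has a genuine gap. Corollary~\ref{cor:fillable-equality} does \emph{not} give
\[
-\max\operatorname{deg}_{v}P_{K}(v,z)=2g_{3}(K);
\]
it only gives equality with $2g_{4}(K)$. Cromwell's criterion \cite[Theorem~4]{homogeneous}, on the other hand, is stated for the \emph{three}-genus: a homogeneous knot with $-\max\operatorname{deg}_{v}P_{K}=2g_{3}(K)$ is positive. So to run your argument you would need to know $g_{4}(K)=g_{3}(K)$ for an alternating Lagrangian fillable knot, and this is precisely the nontrivial content you have not supplied. (It is false for alternating knots in general---the figure-eight knot has $-\max\operatorname{deg}_{v}P=-2$ but $2g_{3}=2$---so some use of the Lagrangian fillability beyond quasipositivity is required, and ``quasipositive $\Rightarrow g_{4}=g_{3}$'' is also false.) Your chain of citations therefore does not close.

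The paper's proof avoids this issue entirely by a different route: it uses Ng's formula $\operatorname{TB}(K)=-c_{-}-\sigma(K)-1$ for a reduced alternating diagram, together with $s(K)=-\sigma(K)$ for alternating knots and $\operatorname{TB}(K)=s(K)-1$ from Corollary~\ref{cor:fillable-equality}, to conclude directly that $c_{-}=0$. This argument stays at the level of the Thurston--Bennequin number and signature and never needs to compare $g_{4}$ with $g_{3}$ or invoke Cromwell's positivity criterion.
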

\begin{proof}
The "if" part has been proved by Hayden and Sabloff (Theorem~\ref{thm:Hayden-Sabloff}). 
\par
Let $K$ be an alternating and Lagrangian fillable knot. 
Let $D$ be a reduced alternating diagram of $K$ with $c_{-}$ negative crossings. 
It is sufficient to prove that $c_{-}=0$.  
Ng \cite{Ng1} prove that $TB(K)=-c_{-}-\sigma (K)-1$, where $\sigma(K)$ is the signature of $K$. 
It is known that $s(K)=-\sigma(K)$ for any alternating knot $K$. 
By Corollary~\ref{cor:fillable-equality}, $TB(K)=s(K)-1$ for any Lagrangian fillable knot $K$. 
Hence, we obtain 
\[
c_{-}=-TB(K)-\sigma (K)-1=-(s(K)-1)+s(K)-1=0. 
\]
\end{proof}
Finally, we give the table of Lagrangian fillable and non-alternating knots with up to $10$ crossings (up to mirror image) (see Table~\ref{table1}). 
\begin{table}[htb]
  \begin{tabular}{|c|c||c|c||c|c||c|c|} \hline
    name & LF & name& LF& name& LF & name& LF\\\hline 
     $8_{19}$&Yes&$10_{127}$&Yes$^{\ast}$&$10_{141}$&No&$10_{155}$&No\\
     $8_{20}$&No&$10_{128}$&Yes&$10_{142}$&Yes&$10_{156}$&No\\
     $8_{21}$&Yes$^{\ast}$&$10_{129}$&No&$10_{143}$&No&$10_{157}$&Yes$^{\ast}$\\ 
     $9_{42}$&No&$10_{130}$&No&$10_{144}$&No&$10_{158}$&No\\
     $9_{43}$&No&$10_{131}$&Yes$^{\ast}$&$10_{145}$&Yes&$10_{159}$&No\\
     $9_{44}$&No&$10_{132}$&No&$10_{146}$&No&$10_{160}$&No\\
     $9_{45}$&Yes$^{\ast}$&$10_{133}$&Yes$^{\ast}$&$10_{147}$&No&$10_{161}$&Yes\\
     $9_{46}$&Yes$^{\ast}$&$10_{134}$&Yes&$10_{148}$&No&$10_{162}$&No\\
     $9_{47}$&No&$10_{135}$&No&$10_{149}$&Yes$^{\ast}$&$10_{163}$&No\\
     $9_{48}$&No&$10_{136}$&No&$10_{150}$&No&$10_{164}$&No\\
     $9_{49}$&Yes&$10_{137}$&No&$10_{151}$&No&$10_{165}$&Yes$^{\ast}$\\
     $10_{124}$&Yes&$10_{138}$&No&$10_{152}$&Yes&&\\
     $10_{125}$&No&$10_{139}$&Yes&$10_{153}$&No&&\\
     $10_{126}$&No&$10_{140}$&Yes$^{\ast}$&$10_{154}$&Yes&&\\ \hline
  \end{tabular}
  \caption{The Lagrangian fillability of non-alternating knots with up to $10$ crossings. 
For example, $8_{19}$ or its mirror is Lagrangian fillable. Neither $8_{20}$ nor its mirror is Lagrangian fillable. 
To prove ``Yes$^{\ast}$", we find front projections with maximal Thurston-Bennequin numbers and use Theorem~\ref{thm:tool}. 
To find such diagrams, we refer to \cite{knot_info} and \cite{legendrian-atlas}. }
  \label{table1}
\end{table}
\\
\ \\
\noindent{\bf Acknowledgements: }  
The author would like to thank Tomomi Kawamura for helpful comments on divide links. 
The author also thanks the referee for a careful reading and helpful comments. 
This work was supported by JSPS KAKENHI Grant numbers JP16H07230 and JP18K13416. 
%

\bibliographystyle{amsplain}
\bibliography{tagami}
\end{document}